\documentclass[12pt]{amsart}
\usepackage{amscd}
\usepackage{amssymb}
\usepackage{amsfonts}
\usepackage[matrix,arrow,curve]{xy}

\newcommand{\bZ}{{\mathbb Z}}

\newcommand{\bP}{{\mathbb P}}
\newcommand{\bG}{{\mathbb G}}

\newcommand{\bF}{{\mathbb F}}
\newcommand{\Q}{\mathbb{Q}}
\newcommand{\Z}{\mathbb{Z}}

\newtheorem{thm}{Theorem}[section]
\newtheorem{lemma}[thm]{Lemma}

\theoremstyle{definition}

\newtheorem{Propsd}[thm]{Proposition}

\setlength{\textwidth}{126mm}
\numberwithin{equation}{section}

\pagestyle{plain}

\begin{document}

\title{Note on  the counterexamples for the integral Tate conjecture
over finite fields}

\author{Alena Pirutka and Nobuaki Yagita}

\address{Alena Pirutka, Centre de Math\'ematiques Laurent Schwartz, UMR 7640 de CNRS, \'Ecole Polytechnique 91128 Palaiseau France}

\email{alena.pirutka@math.polytehcnique.fr}

\address{ Nobuaki Yagita, Department of Mathematics, Faculty of Education,
Ibaraki University,
Mito, Ibaraki, Japan}

\email{ yagita@mx.ibaraki.ac.jp}
\keywords{}
\subjclass[2000]{Primary 14C15; Secondary 14L30, 55R35}

\begin{abstract}
In this note we discuss some examples of non torsion and non algebraic cohomology classes for varieties over finite fields.
The approach follows the construction of Atiyah-Hirzebruch and Totaro.
\end{abstract}

\maketitle

\section{Introduction}

Let $k$ be a finite field and let $X$ be a smooth and projective variety over $k$. Denote $\bar k$ an algebraic closure of $k$ and $\mathfrak g=Gal(\bar k/k)$. Let $\ell$ be a prime, $\ell\neq char(k)$. The Tate conjecture \cite{Ta65} predicts that the cycle class map
$$CH^i(X_{\bar k})\otimes \Q_{\ell} \to \bigcup_U H^{2i}_{\acute{e}t}(X_{\bar k}, \Q_{\ell}(i))^U,$$
where the union is over all open subgroups $U$ of $\mathfrak g$,
is surjective.

In the integral version one is interested in the cokernel of the cycle class map
\begin{equation}\label{iTate} CH^i(X_{\bar k})\otimes \Z_{\ell} \to \bigcup_U H^{2i}_{\acute{e}t}(X_{\bar k}, \Z_{\ell}(i))^U.
 \end{equation}
 This map is not surjective in general: the counterexamples of Atiyah-Hirzebruch \cite{AH62}, revisited by Totaro \cite{To97},  to the integral version of the Hodge conjecture,  provide also counterexamples to the integral Tate conjecture \cite{CTSz10}. More precisely, one constructs an $\ell$-torsion class in $H^4_{\acute{e}t}(X_{\bar k}, \Z_{\ell}(2))$, which is not algebraic, for some smooth and projective variety $X$. However, one then wonders if there exists an example of a variety $X$ over a finite field, such that the map
\begin{equation}\label{cnt} CH^i(X_{\bar k})\otimes \Z_{\ell} \to \bigcup_U H^{2i}_{\acute{e}t}(X_{\bar k}, \Z_{\ell}(i))^U/torsion
 \end{equation}
 is not surjective  (\cite{Mi07, CTSz10}). In the context of an integral version of the Hodge conjecture, Koll\'ar \cite{Ko90} constructed such examples of curve classes. Over a finite field, Schoen \cite{Sch98} has proved that the map (\ref{cnt}) is always surjective for curve classes, if the Tate conjecture holds for divisors on surfaces.

  In this note we follow the approach of Atiyah-Hirzebruch and Totaro and we produce examples where the map (\ref{cnt}) is not surjective for $\ell=2,3\text{ or }5$. 

\theosd\label{ceProj}{\it Let $\ell$ be a prime  from the following list: $\ell=2,3\text{ or }5.$
There exists a smooth and projective variety $X$ over a finite field $k$,  $char k\neq \ell$, such that the cycle class map  $$CH^2(X_{\bar k})\otimes \Z_{\ell} \to \bigcup_U H^{4}_{\acute{e}t}(X_{\bar k}, \Z_{\ell}(2))^U/torsion$$ is not surjective.\\
}

As in the examples of Atiyah-Hirzebruch and Totaro, our counterexamples are obtained as a projective approximation of the cohomology of classifying spaces of some  simple simply connected groups, having $\ell$-torsion in its cohomology. The non algebraicity of a cohomology class is obtained by means of motivic cohomology operations: one establishes that  the operation $Q_1$ does not vanish on some class of degree $4$, but it always vanishes on the algebraic classes. This is done in section \ref{Operations}. Next, in section \ref{Cspaces} we discuss some properties of classifying spaces in our context and finally we construct a projective variety approximating the cohomology of these spaces in small degrees in section \ref{Proj}.\\

\paragraph{\bf Acknowledgements.}  This work has started during the Spring School and Workshop on
Torsors, Motives and Cohomological Invariants in the Fields Institute, Toronto, as a part of a Thematic Program on Torsors, Nonassociative Algebras and Cohomological Invariants (January-June 2013), organized by V. Chernousov, E. Neher, A. Merkurjev,
A. Pianzola and K. Zainoulline. We would like to thank the organizers and the Institute for their invitation, hospitality and support. We are very grateful to B. Totaro for his interest and for generously communicating his construction of a projective algebraic approximation in theorem \ref{ceProj}. The first author would like to thank B. Kahn and J. Lannes for useful discussions.

\section{Motivic version of Atiyah-Hirzebruch arguments, revisited}\label{Operations}
\subsection{Operations}
Let  $k$ be a perfect field with $char(k)\not =\ell$ and let $\mathcal H_{\cdot}(k)$ be the motivic homotopy theory of pointed $k$-spaces (see \cite{Mo-Vo}). For $X\in \mathcal H_{\cdot}(k)$, denote by $H^{*,*'}(X, \Z/\ell)$   the motivic cohomology groups with $\Z/\ell$-coefficients ({\it loc.cit.}). If $X$ is a smooth variety over $k$, note that one has an isomorphism  
$CH^*(X)/\ell\stackrel{\sim}{\to} H^{2*,*}(X, \Z/\ell)$.

Voevodsky \cite{Vo2} defined the reduced power operations $P^i$ and the Milnor's operations $Q_i$ on $H^{*,*'}(X, \Z/\ell)$:
 \[  P^i: H^{*,*'}(X,\bZ/{\ell})\to H^{*+2i(\ell-1),*'+i(\ell-1)}(X,\bZ/{\ell}), i\geq 0\]
 \[  Q_i: H^{*,*'}(X,\bZ/{\ell})\to H^{*+2\ell^{i}-1,*'+(\ell^i-1)}(X,\bZ/{\ell}), i\geq 0,\]
 where $Q_0=\beta$ is the Bockstein operation of degree $(1,0)$
induced from the short exact sequence
$0\to \bZ/\ell\stackrel{\times \ell}{\to} \bZ/\ell^2\to \bZ/\ell\to 0$
(see also \cite{Ri12}).

One of the key ingredients for this construction is the following computation of
 the motivic cohomology of
the classifying space $B\mu_{\ell}$ (\cite{Vo2}):
\begin{lemma}\label{mot-coh-Bmu}(\cite[\S 6]{Vo2})
For each object $X\in \mathcal H_{\cdot}(k)$, the graded algebra $
 H^{*,*'}(X\times B\mu_{\ell},\bZ/{\ell})$ is generated over $
H^{*,*'}(X,\bZ/{\ell})$ by \\$x,\, deg(x)=(1,1)$  and $y, \, deg(y)=(2,1)$\\ with $\beta(x)=y$ and
$x^2=\begin{cases} 0\quad \ell\text{ is odd}\\
                           \tau y+\rho x \quad \ell=2
\end{cases}$\\
where $0\not =\tau\in H^{0,1}(Spec(k),\bZ/\ell)\cong \mu_{\ell}$ and
$\rho=(-1)\in k^{*}/(k^{*})^2\cong K^M_1(k)/2\cong H^{1,1}(Spec(k),\bZ/2).$\\
                           \end{lemma}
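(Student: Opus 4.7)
The plan is to compute the motivic cohomology of a geometric model of $B\mu_{\ell}$, then bootstrap to the relative statement over $X$.

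\textbf{Geometric model and Gysin computation.} I would use the approximation $B\mu_\ell = \operatorname{colim}_n U_n$, where $U_n := (\mathbb{A}^n \setminus 0)/\mu_\ell$ for $\mu_\ell$ acting by scalar multiplication. The variety $U_n$ is the complement of the zero section in the total space of $\mathcal O_{\mathbb P^{n-1}}(-\ell)$. The Gysin long exact sequence for this closed embedding reads
\[
\cdots \to H^{*-2,*'-1}(\mathbb P^{n-1},\bZ/\ell) \xrightarrow{\,\cup(\ell y)\,} H^{*,*'}(\mathbb P^{n-1},\bZ/\ell) \to H^{*,*'}(U_n,\bZ/\ell) \to H^{*-1,*'-1}(\mathbb P^{n-1},\bZ/\ell) \to \cdots
\]
where $y \in H^{2,1}(\mathbb P^{n-1},\bZ/\ell)$ is the hyperplane class and the first arrow is cup product with the Euler class $\ell y$. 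This Euler class vanishes mod $\ell$, so the sequence splits into short exact sequences, yielding an $H^{*,*'}(\operatorname{Spec} k,\bZ/\ell)$-module isomorphism $H^{*,*'}(U_n,\bZ/\ell) \cong H^{*,*'}(\mathbb P^{n-1},\bZ/\ell) \oplus x\cdot H^{*-1,*'-1}(\mathbb P^{n-1},\bZ/\ell)$, where $x$ in bidegree $(1,1)$ is a lift of the boundary of $1 \in H^{0,0}$. Taking the colimit in $n$ gives the free module on $\{x^i y^j\}$ over $H^{*,*'}(\operatorname{Spec} k,\bZ/\ell)$.

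\textbf{Identification of the Bockstein.} To see $\beta(x)=y$, I would use the naturality of the Bockstein with respect to the Gysin sequence, or equivalently observe that $x$ represents the class of the $\mu_\ell$-torsor $\mathbb A^n\setminus 0 \to U_n$. Under the Kummer sequence $1\to \mu_\ell \to \mathbb G_m \xrightarrow{\ell} \mathbb G_m \to 1$, its connecting map sends this torsor class precisely to the first Chern class $y$ of the associated line bundle $\mathcal O(1)$.

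\textbf{Quadratic relation on $x$.} For odd $\ell$, motivic graded-commutativity with respect to the topological degree gives $x\cdot x = -x\cdot x$ in $H^{2,2}$, hence $x^2=0$. The delicate case is $\ell=2$, where this argument fails. Here I would compute $x^2$ explicitly in $H^{2,2}(B\mu_2,\bZ/2)$: both $\tau y$ and $\rho x$ lie in this bidegree, and by analyzing the squaring operation on the degree-one motivic cohomology (equivalently, $K^M_*/2$ of a ring containing $-1$), one shows $x^2=\tau y + \rho x$, recovering the classical Milnor/$K$-theoretic identity $\{a\}\cdot\{a\}=\{-1\}\cdot\{a\}$ in $K^M_2(k)/2$ from the point case, and transporting it to $B\mu_2$ via the map $\operatorname{Spec} k \to B\mu_2$ classifying square roots. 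This step is the main obstacle, since it genuinely requires the specific structure of motivic cohomology in weight $\leq 1$ and is not formal.

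\textbf{Passage to $X\times B\mu_\ell$.} Since $H^{*,*'}(B\mu_\ell,\bZ/\ell)$ is a free module over $H^{*,*'}(\operatorname{Spec} k,\bZ/\ell)$ on the explicit basis $\{x^\epsilon y^j\}$, a Leray-Hirsch argument applies to the projection $X\times B\mu_\ell \to X$: the external products with $x$ and $y$ produce an $H^{*,*'}(X,\bZ/\ell)$-module map that is an isomorphism by reducing to the case $X=\operatorname{Spec} k$ via the cellular/Gysin description of the $U_n$. The multiplicative relations are inherited from the absolute computation.
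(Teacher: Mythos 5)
The paper itself offers no proof of this lemma: the parenthetical \emph{(\cite[\S 6]{Vo2})} is a citation, and the statement is taken verbatim from Voevodsky's computation of $H^{*,*'}(B\mu_\ell,\bZ/\ell)$. Your reconstruction follows the same line as Voevodsky's argument: the geometric model $B\mu_\ell=\operatorname{colim}_n(\bA^n\setminus 0)/\mu_\ell$ identified with the complement of the zero section in $\mathcal O_{\bP^{n-1}}(-\ell)$, the Gysin sequence with Euler class $\equiv 0\ (\mathrm{mod}\ \ell)$ giving the free module structure on $\{x^\epsilon y^j\}$, the Kummer-sequence identification $\beta(x)=y$, the graded-commutativity argument for $x^2=0$ at odd $\ell$, and the Leray--Hirsch passage to $X\times B\mu_\ell$. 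All of that is sound and is essentially Voevodsky's own route.

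The one genuine gap is in your argument for $x^2=\tau y+\rho x$ at $\ell=2$, which you yourself flag as the delicate step. Transporting the Milnor relation $\{a\}\{a\}=\{-1\}\{a\}$ via a map $f\colon Spec(k)\to B\mu_2$ classifying a square root cannot pin down the coefficient of $y$: one has $f^*y=f^*\beta(x)=\beta\{a\}\in H^{2,1}(Spec(k),\bZ/2)$, and this group vanishes because $H^{p,q}(Spec(k),\bZ/\ell)=0$ for $p>q$. So for every $a$ the pullback $f^*(x^2-\rho x-c\,y)$ equals $\{a\}^2-\rho\{a\}=0$ regardless of the coefficient $c\in H^{0,1}(Spec(k),\bZ/2)$ in front of $y$; the term $\tau y$ is simply invisible to restriction along $k$-points. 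To determine that this coefficient is $\tau$ (rather than $0$) one needs an input that is not captured by the point case: for example, compatibility with the topological or \'etale realization, where $\tau\mapsto 1$ and the relation $x^2=y$ holds in $H^*(B\bZ/2;\bF_2)$, or a direct computation in $H^{2,2}$ of the finite-dimensional approximations $U_n$ using the known ring structure of $H^{*,*'}(\bP^{n-1},\bZ/2)$ and the explicit Gysin boundary. Without such an argument the decomposition $H^{2,2}(B\mu_2,\bZ/2)\cong H^{2,2}(pt)\oplus H^{1,1}(pt)\cdot x\oplus H^{0,1}(pt)\cdot y$ only lets you conclude $x^2=\rho x+c\,y$ with $c$ undetermined. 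This is the part Voevodsky's \S 6 actually does the work for; your sketch as written does not close it.
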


 For what follows, we assume that $k$ contains a primitive $\ell^2$-th root of unity
$\xi$, so that $B\bZ/\ell\stackrel{\sim}{\to}B\mu_{\ell}$ and $\beta(\tau)=\xi^{\ell}$ ($=\rho$ for $p=2$) is zero in $k^*/(k^*)^{\ell}=H^{1,1}_{et}(Spec(k);\bZ/{\ell})$. \\

 We will need the following properties:

 \prop\label{propOp}{
 \begin{itemize}
\item [(i)]  $\quad P^i(x)=0\ for\ i>m/2$ and  $x\in H^{m,n}(X,\bZ/{\ell})$;
\item[(ii)] $\quad P^{i}(x)=x^{\ell} \text{ for }  x\in H^{2i,i}(X,\bZ/{\ell})$;
\item [(iii)] for  $X$  smooth  the operation $$Q_i:CH^m(X)/{\ell}=H^{2m,m}(X,\bZ/\ell)\to H^{2m+2\ell^{i}-1,m+(\ell^i-1)}(X,\bZ/{\ell})$$
 is zero ;
\item [(iv)]   $Op.(\tau x)=\tau Op.(x )\quad \text{for} \ Op.=\beta, Q_i \ or\ P^j$;
\item [(v)]   $Q_i=[P^{\ell^{i-1}}, Q_{i-1}].$\\
 \end{itemize}}
 \proof{See \cite[\S 9]{Vo2}; for (iii) one uses that $H^{m,n}(X,\bZ/{\ell})=0$ if $m-2n>0$ and $X$ is a smooth variety over $k$, (iv) follows from the Cartan formula for the motivic cohomology.}

\subsection{Computations for $B\bZ/\ell$}

The computations in this section are similar to \cite{AH62, To97, To99}.

\begin{lemma} In $H^{*,*'}(B\bZ/\ell,\bZ/\ell)$, we have
$Q_i(x)=y^{\ell^i}$ and $Q_i(y)=0$.
\end{lemma}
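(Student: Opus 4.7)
The plan is to prove both formulas by a single induction on $i\geq 0$, using the commutator relation $Q_i=[P^{\ell^{i-1}},Q_{i-1}]$ from Proposition \ref{propOp}(v) to pass from stage $i-1$ to stage $i$.

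For the base case $i=0$, I have $Q_0=\beta$. By Lemma \ref{mot-coh-Bmu}, $\beta(x)=y=y^{\ell^0}$, which gives $Q_0(x)=y^{\ell^0}$. For $Q_0(y)$, I write $y=\beta(x)$ and invoke $\beta^2=0$, so $Q_0(y)=\beta\beta(x)=0$.

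For the inductive step on $Q_i(x)$, I write
\[
Q_i(x)=P^{\ell^{i-1}}Q_{i-1}(x)-Q_{i-1}P^{\ell^{i-1}}(x).
\]
Since $x\in H^{1,1}$ and $\ell^{i-1}\geq 1>1/2$ whenever $i\geq 1$, Proposition \ref{propOp}(i) gives $P^{\ell^{i-1}}(x)=0$, killing the second term. For the first term, the inductive hypothesis supplies $Q_{i-1}(x)=y^{\ell^{i-1}}$, which lies in bidegree $(2\ell^{i-1},\ell^{i-1})$, so Proposition \ref{propOp}(ii) yields $P^{\ell^{i-1}}(y^{\ell^{i-1}})=(y^{\ell^{i-1}})^{\ell}=y^{\ell^{i}}$. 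This gives $Q_i(x)=y^{\ell^i}$.

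For the inductive step on $Q_i(y)$, I again expand
\[
Q_i(y)=P^{\ell^{i-1}}Q_{i-1}(y)-Q_{i-1}P^{\ell^{i-1}}(y).
\]
The first term vanishes by induction. For $i\geq 2$ the second term also vanishes since $y\in H^{2,1}$ and Proposition \ref{propOp}(i) gives $P^{\ell^{i-1}}(y)=0$ as soon as $\ell^{i-1}>1$. The remaining case $i=1$ is the only delicate point: here $P^1(y)=y^\ell$ by Proposition \ref{propOp}(ii), so I must show $Q_0(y^\ell)=0$. Using that $Q_0=\beta$ is a derivation mod $\ell$ (Cartan formula, Proposition \ref{propOp} and its references), $\beta(y^\ell)=\ell y^{\ell-1}\beta(y)=0$; alternatively, $\beta(y)=\beta^2(x)=0$ directly. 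Either way $Q_1(y)=0$, completing the induction.

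The main obstacle is really just bookkeeping: one has to check that for every $i$ the relevant $P^{\ell^{i-1}}$ lands in the correct bidegree regime to apply either the vanishing (i) or the Frobenius-like identity (ii), and separately handle $i=1$ in the $Q_i(y)$ calculation where $P^1(y)$ does not vanish for degree reasons.
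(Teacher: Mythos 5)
Your proof is correct and follows essentially the same route as the paper: induction via $Q_i=[P^{\ell^{i-1}},Q_{i-1}]$, using the degree bound (i) to kill $P^{\ell^{i-1}}(x)$ and $P^{\ell^{i-1}}(y)$ for $i\geq 2$, the Frobenius identity (ii) to get $P^{\ell^{i-1}}(y^{\ell^{i-1}})=y^{\ell^i}$, and a separate treatment of $Q_1(y)=-\beta(y^\ell)=0$. You have merely spelled out the bidegree bookkeeping that the paper leaves implicit.
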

\begin{proof}  By definition $Q_0(x)=\beta(x)=y$. Using induction and Proposition \ref{propOp}, we compute
\[ Q_i(x)=P^{\ell^{i-1}}Q_{i-1}(x)- Q_{i-1}P^{\ell^{i-1}}(x)=P^{\ell^{i-1}}Q_{i-1}(x)\]
\[  = P^{\ell^{i-1}}(y^{\ell^{i-1}})=y^{\ell^i}.\]
Then $Q_1(y)=-Q_0P^1(y)=-\beta(y^{\ell})=0$. For $i>1$, using induction and Proposition \ref{propOp} again, we conclude that $Q_i(y)=-Q_{i-1}P^{\ell^{i-1}}(y)=0.$\\
\end{proof}

Let $G=(\bZ/\ell)^3$.
 As above, we assume that $k$ contains a primitive $\ell^2$-th root of unity. From Lemma \ref{mot-coh-Bmu}, we have an isomorphism
\[ H^{*,*'}(BG,\bZ/\ell)\cong H^{*,*'}(Spec(k),\bZ/\ell)[y_1,y_2,y_3]\otimes
\Lambda(x_1,x_2,x_3)\]
where $\Lambda(x_1,x_2,x_3)$ is isomorphic to the $\bZ/{\ell}$-module generated by $1$ and $x_{i_1}...x_{i_s}$ for $i_1<...<i_s$
 and $x_ix_j=-x_jx_i\ (i\le j)$, with  $\beta(x_i)=y_i$ and $x_i^2=\tau y_i$ for $\ell=2$. \\
\begin{lemma}\label{op-nonzero-zl}  Let $x=x_1x_2x_3$ in $H^{3,3}(BG,\bZ/\ell)$.  Then
\[Q_iQ_jQ_k(x)\not =0 \in H^{2*,*}(BG,\bZ/\ell)\quad for\ i<j<k.\]
\end{lemma}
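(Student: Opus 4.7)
The plan is to compute $Q_iQ_jQ_k(x_1x_2x_3)$ explicitly using the derivation property of the Milnor operations, and then to observe that the resulting expression is a Vandermonde-type determinant in $y_1,y_2,y_3$ which is manifestly nonzero.

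First I would recall two structural facts needed as input. (a) Each $Q_n$ is a graded derivation of (first) degree $2\ell^n-1$ on the mod-$\ell$ motivic cohomology; this is a standard consequence of the Cartan formula quoted in Proposition \ref{propOp}(iv). (b) By the previous lemma applied to each factor $B\bZ/\ell$ together with the K\"unneth decomposition in Lemma \ref{mot-coh-Bmu}, we have $Q_n(x_r)=y_r^{\ell^n}$ and $Q_n(y_r)=0$ for $r=1,2,3$. Since $y_r$ has even topological degree it commutes with every other generator, and since we start from the product $x_1x_2x_3$ of three distinct exterior generators, each successive application of a derivation lowers the number of $x$-factors in every monomial, so no expressions $x_r^2$ are ever produced (this is the only place where the $\ell=2$ relation $x_r^2=\tau y_r$ could intervene, and it does not).

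Second I would compute step by step. Applying $Q_k$ gives
\[Q_k(x_1x_2x_3)=y_1^{\ell^k}x_2x_3-y_2^{\ell^k}x_1x_3+y_3^{\ell^k}x_1x_2,\]
and applying $Q_j$ and then $Q_i$ and collecting the six resulting monomials produces
\[Q_iQ_jQ_k(x_1x_2x_3)=\det\begin{pmatrix}y_1^{\ell^i}&y_2^{\ell^i}&y_3^{\ell^i}\\ y_1^{\ell^j}&y_2^{\ell^j}&y_3^{\ell^j}\\ y_1^{\ell^k}&y_2^{\ell^k}&y_3^{\ell^k}\end{pmatrix}\ \in\ \bZ/\ell\,[y_1,y_2,y_3]\subset H^{2*,*}(BG,\bZ/\ell).\]
The careful sign bookkeeping uses only the graded Leibniz rule $Q_n(ab)=Q_n(a)b+(-1)^{|a|}aQ_n(b)$ (with $|x_r|=1$) together with the commutativity of the $y_r$'s.

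Finally, I would argue nonvanishing. Since $i<j<k$ the three exponents $\ell^i,\ell^j,\ell^k$ are pairwise distinct, so each of the six monomials $y_1^{\ell^{\sigma(a)}}y_2^{\ell^{\sigma(b)}}y_3^{\ell^{\sigma(c)}}$ (for $\sigma\in S_3$ permuting $\{i,j,k\}$) is determined by which exponent sits on which variable, hence these six monomials are all distinct. Their signed sum is therefore a nonzero element of the polynomial ring $\bZ/\ell[y_1,y_2,y_3]$, and a fortiori nonzero in $H^{2*,*}(BG,\bZ/\ell)$, proving the claim.

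The only potential obstacle is the sign/degree bookkeeping in the iterated derivation computation and the implicit verification that the Leibniz rule applies cleanly to $Q_n$ in the motivic setting; the latter is subsumed by the Cartan formula invoked in Proposition \ref{propOp}, and the former is a direct, short calculation since we never encounter a square of an exterior generator.
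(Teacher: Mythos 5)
Your proof is correct and follows essentially the same route as the paper: iterate the Leibniz rule using $Q_n(x_r)=y_r^{\ell^n}$, $Q_n(y_r)=0$, obtain the signed sum over $S_3$ (which you package as a Vandermonde-type determinant), and observe that the six monomials are distinct since $\ell^i,\ell^j,\ell^k$ are. Your explicit remark that the $\ell=2$ relation $x_r^2=\tau y_r$ never intervenes, because each derivation strictly lowers the number of exterior factors, is a small but worthwhile clarification that the paper leaves implicit.
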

\begin{proof} Using Proposition \ref{propOp}(v) and Cartan formula (\ref{propOp}(iv)), we get
\[  Q_k(x)=y_1^{\ell^k}x_2x_3-y_2^{\ell^k}x_1x_3+y_3^{\ell^k}x_1x_2.\]
Then we deduce
\[ Q_iQ_jQ_k(x)=\sum\limits_{\sigma\in S_3} \pm y_{\sigma(1)}^{\ell^k}y_{\sigma(2)}^{\ell^j}y_{\sigma(3)}^{\ell^i}
\not =0\in \bZ/{\ell}[y_1,y_2,y_3].\]
\end{proof}

\section{exceptional Lie groups}\label{Cspaces}

Let $(G,\ell)$ be a simple simply connected Lie group and a prime number from the following list:
\begin{equation}\label{listGl}  (G,\ell)= \begin{cases} G_2, \ell=2,\\
                        F_4, \ell=3,\\
                        E_8,  \ell=5.
                        \end{cases}
\end{equation}

Then $G$ is 2-connected and $H^3(G, \bZ)\cong \bZ$.  Hence
$BG$, viewed as a topological space, is $3$-connected and $H^4(BG, \bZ)\cong \bZ$ (see \cite{Mi-To} for example).
We write  $x_4(G)$ for a generator of $H^4(BG, \bZ)$.

Given a field $k$ with $char(k)\not =\ell$, let us denote by $G_k$
the (split) reductive algebraic group over $k$ corresponding to the Lie group $G$. 

The Chow ring $CH^*(BG_k)$  has been defined  by Totaro  \cite{To99}. More precisely, one has  \begin{equation}\label{BG}BG_k=\varinjlim (U/G_k),\end{equation}
 where $U\subset W$ is an open  set in a linear representation $W$   of $G_k$, such that
 $G_k$ acts freely on $U$. One can then identify $CH^i(BG_k)$
with the group $CH^i(U/G_k)$ if  $\mathrm{codim}_{W}(W\setminus U)>i$,  the group $CH^i(BG_k)$ is then independent of a choice of such $U$. Similarly, one can define
the \'etale cohomology groups $H^{i}_{\acute{e}t}(BG_{k},\bZ_{\ell}(j))$ and the motivic cohomology groups $H^{*,*'}(BG_{ k},\bZ/\ell)$ (see \cite{KN}), the latter  coincide with the motivic cohomology groups of \cite{Mo-Vo} (cf. \cite[Proposition 2.29 and Proposition 3.10]{KN}).  We also have the cycle class map
\begin{equation}\label{cycle-class}\quad cl:CH^*(BG_{\bar k})\otimes\bZ_{\ell}\to
\bigcup_{U}H^{2*}_{\acute{e}t}(BG_{\bar k},\bZ_{\ell}(*))^{U},\end{equation}
where the union is over all open subgroups $U$ of $Gal(\bar k/k).$

The following proposition is known.

\prop\label{cohBGA}{Let $(G,\ell)$ be a group and a prime number from the list (\ref{listGl}). Then \begin{itemize}
\item[(i)] the group $G$ has a maximal elementary non toral subgroup of rank $3$: $$i:A\simeq (\mathbb Z/\ell)^3\subset G;$$
\item[(ii)] $H^4(BG,\mathbb Z/\ell)\simeq \mathbb Z/\ell$, generated by the image $x_4$ of the generator $x_4(G)$ of $H^4(BG,\mathbb Z)\simeq \mathbb Z$; 
\item[(iii)]   $Q_1(i^*x_4)=Q_1Q_0(x_1x_2x_3)$, in the notations of Lemma \ref{op-nonzero-zl}. In particular,  $Q_1(i^*x_4)$ is non zero.
\end{itemize}
}
\proof{ For $(i)$ see \cite{Gr}, for the computation of the cohomology groups with $\mathbb Z/\ell$-coefficients in $(ii)$ see \cite{Mi-To} VII 5.12;  $(iii)$ follows from \cite{Ka-Te-Ya} for $\ell=2$ and \cite[Proposition 3.2]{Ka-Ya} for $\ell=3,5$ (see \cite{Ka-No} as well). \qed}

\section{Algebraic approximation of $BG$}\label{Proj}

Write
\begin{equation}\label{defBG}
BG_k=\varinjlim(U/G_k)
\end{equation}
as in the previous section.
Using proposition \ref{cohBGA} and a specialization argument, we will first
 construct a quasi-projective algebraic variety $X$ over $k$ as a quotient $X=U/G_k$ (where  $codim_{W}(W\setminus U)$ is big enough), such that the cycle class map (\ref{cnt}) is not surjective for such $X$.
However,  if one is interested only in quasi-projective counterexamples for the surjectivity of the map (\ref{cnt}), one can produce more naive
examples, for instance as a complement of some smooth hypersurfaces in a projective space. Hence we are interested to find an approximation of Chow groups and the  \'etale cohomology of $BG_{\bar k}$ as a smooth and projective variety. In the case when the group $G$ is finite, this is done in \cite[Th\'eor\`eme 2.1]{CTSz10}.  In this section we give such an approximation for the groups we consider here, this construction is suggested by B. Totaro.

\begin{Propsd}\label{AlgAppr}  {\it Let $G$ be a compact Lie group as in (\ref{listGl}).
For all but finitely many primes $p$ there exists a smooth and projective variety $X_{k}$ over a finite field $k$ with  $char\,k=p$, an element $x_{4,k}\in H^4_{\acute{e}t}(B(\mathbb G_m\times G_{\bar k}), \mathbb Z_{\ell}(2))$, invariant under the action of  $Gal(\bar k/k)$ and a map
$\tau: X_{k}\to B(\bG_m\times G_k)$ in the category $\mathcal H_{\cdot}(k)$ such that 
\begin{itemize}
\item[(i)] $y_{4,k}=\tau^*pr_2^* x_{4,k}$ is a non zero class in $H^4_{\acute{e}t}(X_{\bar k}, \mathbb Z_{\ell}(2))/torsion$, where $pr_2:\mathbb G_m\times G_k\to G_k$ is the projection on the second factor;
 \item[(ii)] the operation $Q_1(\bar  y_{4,k})$ is non zero,  where we  write $\bar y_{4,k}$ for the image of $ y_{4,k}$ in $H^4_{\acute{e}t}(X_{\bar k}, \mathbb Z/\ell).$
 \end{itemize}}
\end{Propsd}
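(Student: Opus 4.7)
The plan is to construct a smooth projective approximation of $B(\bG_m \times G_k)$ in low degrees, following a suggestion of Totaro. First, take the standard quasi-projective approximation: let $V$ be a faithful linear representation of $G$ on which $G$ acts generically freely, and equip $V$ with the scaling action of $\bG_m$, so that $V$ becomes a representation of $\bG_m \times G$. For $\dim V$ large enough, the open locus $U \subset V$ on which $\bG_m \times G$ acts freely has $\codim_V(V \setminus U)$ as large as we wish, and the smooth quasi-projective variety $Y_k := U/(\bG_m \times G_k)$ has \'etale cohomology with $\bZ_\ell$-coefficients and motivic cohomology mod $\ell$ agreeing with those of $B(\bG_m \times G_k)$ in all (bi-)degrees relevant to $x_4$ and its $Q_1$-image (cohomological degree $\le 2\ell + 3$).

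The key step, which is where Totaro's suggestion enters, is to embed $Y_k$ as an open subvariety of a smooth projective $X_k$ with $\codim_{X_k}(X_k \setminus Y_k) \ge \ell + 2$, so that the restriction maps in \'etale cohomology with $\bZ_\ell$-coefficients and in motivic cohomology mod $\ell$ are injective in the relevant degree range by the Gysin long exact sequence (Voevodsky's localization triangle, in the motivic setting). A natural route is to compactify $V$ inside a smooth projective $(\bG_m \times G_k)$-variety such as $\bP(V \oplus \mathbf{1})$ (with $G$ acting trivially on the added line and $\bG_m$ scaling everything), form the quotient on a sufficiently large free locus, and resolve singularities where needed; enlarging $V$ makes the boundary $X_k \setminus Y_k$ of arbitrarily high codimension. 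The map $\tau: X_k \to B(\bG_m \times G_k)$ in $\mathcal H_{\cdot}(k)$ is then the composition of the open immersion $Y_k \hookrightarrow X_k$ with the classifying map $Y_k \to B(\bG_m \times G_k)$, interpreted in the motivic homotopy category.

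With such an $X_k$ in hand, both claims are formal. For (i), $y_{4,k}$ is the pullback under $\tau$ of $pr_2^* x_{4,k}$, which is non-torsion on $B(\bG_m \times G_{\bar k})$ because $H^4(BG_{\bar k}, \bZ_\ell) \supset \bZ_\ell \cdot x_4(G)$ by Proposition \ref{cohBGA}(ii); hence its image in $H^4(Y_{\bar k}, \bZ_\ell(2))/torsion$ is non-zero by the approximation, and Gysin injectivity forces $y_{4,k}$ itself to be non-zero modulo torsion in $H^4_{\acute{e}t}(X_{\bar k}, \bZ_\ell(2))$. For (ii), naturality of $Q_1$ gives $Q_1(\bar y_{4,k}) = \tau^* pr_2^* Q_1(\bar x_{4,k})$; by Proposition \ref{cohBGA}(iii), $Q_1(\bar x_4)$ restricts to the nonzero class $Q_1 Q_0(x_1 x_2 x_3)$ on $BA$, so is nonzero on $BG$, and Gysin injectivity of $\tau^*$ in bidegree $(2\ell + 3, \ell + 1)$ yields $Q_1(\bar y_{4,k}) \neq 0$. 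Finally, the construction is carried out over a finitely generated subring of $\bC$ and specialized to a finite field of characteristic $p \neq \ell$; smooth and proper base change together with the compatibility of $Q_1$ with specialization give the desired example for all but finitely many $p$. The main obstacle is the compactification step: a naive compactification of $U/(\bG_m \times G_k)$ typically introduces boundary divisors of codimension one that spoil the Gysin injectivity, and producing a smooth projective $X_k$ with arbitrarily high-codimension complement of $Y_k$ is exactly the non-trivial geometric content of Totaro's suggestion, extending the finite-group construction of \cite[Th\'eor\`eme 2.1]{CTSz10} to the reductive-group setting.
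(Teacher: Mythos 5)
Your Step~1 (the quasi-projective approximation $Y_k=U/(\bG_m\times G_k)$ with boundary of high codimension) and your Step~3 (spread out over a finitely generated base, then specialize and use smooth/proper base change) match the paper's Steps~1 and~3. The divergence, and the gap, is in the compactification step, which you yourself flag as the hard point but do not actually carry out.

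Your plan is to realize $Y_k$ as an open subvariety of a smooth projective $X_k$ with $\codim_{X_k}(X_k\setminus Y_k)$ large, and then argue by Gysin injectivity. Two things go wrong. First, passing to $\bP(V\oplus\mathbf{1})$ and taking a GIT quotient generally yields a \emph{singular} projective model, and ``resolve singularities where needed'' destroys the very feature you need: blowing up the singular or non-free locus produces exceptional \emph{divisors}, so the complement of $Y_k$ becomes codimension one, and the Gysin sequence gives you nothing in degree $4$ or $2\ell+3$. Producing a smooth compactification with boundary of arbitrarily high codimension is a genuine theorem (this is essentially Ekedahl's result, which the paper cites as a remark but deliberately avoids), not something that follows from enlarging $V$. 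Second, even granting such an $X_k\supset Y_k$, your definition of $\tau$ does not type-check: you cannot compose the open immersion $Y_k\hookrightarrow X_k$ with the classifying map $Y_k\to B(\bG_m\times G_k)$, since both have source $Y_k$; an open immersion with nonempty complement is not an $\bA^1$-equivalence, so the classifying map on $Y_k$ does not extend to $X_k$ in $\mathcal H_\cdot(k)$ for free.

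The paper's route (and Totaro's actual suggestion) goes the other way: instead of enlarging $Y_k$ to a smooth projective $X_k$, one \emph{cuts down}. Form the GIT quotient $\mathcal Y=\bP(\mathcal V_N)//\mathcal G$, a projective but possibly singular scheme containing the smooth quasi-projective approximation $\mathcal W$ as an open with high-codimension complement; then intersect with a general linear space $L\subset\bP^M$ of codimension exactly $1+\dim(Y-W)$. By the codimension count $L\cap Y=L\cap W$, so $X:=L\cap W$ is automatically smooth and projective, and it is a \emph{closed} subvariety of $W$, giving a genuine scheme map $\tau:X\hookrightarrow W\to B(\bG_m\times G_k)$. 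The cohomological comparison in the relevant range then comes not from Gysin injectivity but from the Lefschetz hyperplane theorem for quasi-projective varieties (Hamm's theorem, via Goresky--MacPherson), applied to $W$ and its linear sections, which gives \emph{isomorphisms} $H^i(X)\cong H^i(W)\cong H^i(B(\bG_m\times G))$ for $i\le s$. This both sidesteps resolution of singularities entirely and hands you the map $\tau$ for free. You should replace your compactification-plus-resolution step with this linear-section/Lefschetz argument; the rest of your write-up then goes through.
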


\rem{For the purpose of this note, the proposition above is enough. See also \cite{Ek} for a a general statement on the projective approximation of the cohomology of classifying spaces. \\}

 Theorem \ref{ceProj} now  follows from the proposition above:\\
 
{\it Proof of theorem \ref{ceProj}.\\}
For $k$ a finite field and $X_{k}$  as in the proposition above, we find a nontrivial class $y_{4,k}$ in its cohomology in degree $4$ modulo torsion, which is not annihilated by the operation $Q_1$. This class can not be algebraic by proposition \ref{propOp}(iii). \qed\\

{\it Proof of proposition \ref{AlgAppr}.\\}
We proceed in three steps. First, we construct a quasi-projective approximation in a family parametrized by $Spec\,\mathbb Z$. Then, for the geometric generic fibre we produce a projective approximation, by a topological argument. We finish the proof by specialization. \\

{\it Step 1: construction of a family.\\}
Let $\mathcal G$ be a split reductive group over $B=Spec\,\mathbb Z$ corresponding to $G$, such a group exists by \cite{SGA3} XXV 1.3.
As $B$ is an affine scheme of dimension $1$, we can embed $\mathcal G$ as a closed subgroup of $GL_{d,B}$ for some $d$ (see \cite{SGA3} $\mathrm{VI}_B$ 13.2 and 13.5).  Moreover,  one can assume that $\mathcal G\hookrightarrow PGL_{d, B}$ such that this embedding lifts to $\mathcal H=GL_{d, B}$, up to remplacing $B$ by an open subset (e.g. using the map
$ A\mapsto \begin{pmatrix}1&0&0\\
0&-1&0\\
0&0&A
\end{pmatrix}$ and changing $d$ by $d+2$).

By a construction of \cite[Remark 1.4]{To99} and \cite[Lemme 9.2]{CTSa07},  there exists $n>0$, a linear $\mathcal H$-representation $\mathcal {O}_B^{\oplus n}$  and an $\mathcal H$-invariant open subset $\mathcal U\subset \mathcal {O}_B^{\oplus n}$, which one can assume flat over $B$, such that the action of $\mathcal H$ is free on $\mathcal U$.
Let $\mathcal V_N=\mathcal {O}_B^{\oplus Nn}$.  Then the group $PGL_{n, B}$ acts on $\bP(\mathcal V_N)$ and,
 taking $N$ sufficiently large, one can assume that the action is free
outside a subset $S$ of high codimension  $s\geq 4$.

By  restriction, the group $\mathcal G$ acts on  $\bP(\mathcal V_N)$ as well,
let $\mathcal Y=\bP(\mathcal V_N)//\mathcal G$ be the GIT quotient for this action \cite{Mu65, Sec}. The scheme $\mathcal Y$ is projective over $B$ and we fix an embedding $\mathcal Y \subset \bP^M_B$.
Let \begin{equation}\label{deff}f:\mathcal W\to B\end{equation} be the open set of $\mathcal Y$
corresponding to the quotient of the open set  $\mathcal U$ as above where $\mathcal G$ acts
freely. From the construction,  $\mathcal Y- \mathcal W$ has high codimension in $\mathcal Y$.

For any point $b\in B$ with residue field $\kappa(b)$,
the fibre $\mathcal W_b$ is a smooth quasi-projective variety and if $N$ is big enough, we have isomorphisms by lifting  $\mathcal G$ to $GL_{n, B}$
(cf. p. 263 in \cite{To99})
$$\mathcal W_b\cong (\bP(\mathcal V_N)-S)_b/\mathcal G_b \cong ((\mathcal V_N-\{0\})/\bG_m-S)_b)/\mathcal G_b
\cong (\mathcal V_N-S')_b/(\bG_m
\times \mathcal G)_b$$
where $S'=pr^{-1}S\cup\{0\}$ for  the projection $pr:(\mathcal V_N-\{0\})\to \bP(\mathcal V_N)$.
Hence we have isomorphisms
\begin{equation}\label{isob}H^{i}(\mathcal W_b,\bZ_{\ell})\stackrel{\sim}{\to}  H^{i}(B(\bG_m\times \mathcal G)_b,\bZ_{\ell}) \text{ for }i\leq s, \ell\neq char\, {\kappa(b)},
\end{equation}
 induced by a natural map $\mathcal W_b\to B(\bG_m\times \mathcal G_b)$ from the presentation (\ref{defBG}).\\

{\it Step 2: the generic fibre.\\}
Let $Y=\mathcal Y_\mathbb C$ and $W=\mathcal W_\mathbb C$ be the geometric generic fibres of $\mathcal Y$ and $\mathcal W$ over $B$.
Consider a general linear space $L$ in $\bP^M$ of codimension equal to
$1+dim(Y-W)$. Then $L\cap Y=L\cap W$ so $X:=L\cap W$ is a smooth
projective  variety. Note that one can assume that $L$ is defined over $\mathbb Q$.

By a version of the Lefschetz hyperplane theorem for
quasi-projective varieties, established by Hamm (as a special case of  Theorem II.1.2 in \cite{Go-Ma}),
for  $V\subset \bP^M$ a closed complex subvariety of dimension $d$,
not necessarily smooth,
$Z\subset V$ a closed subset, and $H$ a hyperplane in $\bP^M$,
if $V-(Z\cup H)$ is local complete intersection (e.g. $V-Z$ is smooth)
then
\[\pi_i((V-Z)\cap H)\to \pi_i(V-Z)\]
is an isomorphism for $i<d-1$ and surjective for $i=d-1$. In particular, $H^i((V-Z)\cap H, \bZ)\to H^i(V-Z, \bZ)$
is an isomorphism for $i<d-1$ and surjective for $i=d-1$
by the Whitehead theorem.  

We then deduce that \begin{equation}\label{isoC}H^{i}(X,R)\stackrel{\sim}{\to}  H^{i}(B(\bG_m\times G),R) \text{ for }i\leq s \text{ and } R=\bZ \text{ or } \bZ/n.     
                    \end{equation}
                    Hence $H^{i}_{\acute{e}t}(X,\bZ/n )\stackrel{\sim}{\to}  H^{i}_{\acute{e}t}(B(\bG_m\times G),\bZ/n), i\leq s$. Note that as the cohomology of $BG$ is a direct factor in the cohomology of $B(\mathbb G_m\times G)$, we get that  $x_4(G)$ (with the notations of the previous section) generates a direct factor isomorphic to $\mathbb Z_{\ell}$ in the cohomology group $H^{4}_{\acute{e}t}(X,\bZ_{\ell})$.\\

{\it Step 3: specialization argument.\\}
We can now specialize the construction above to obtain the statement over a finite field. 

More precisely, one can find a dense open set $B'\subset B$ and a linear space $\mathcal L\subset \bP_{B'}^M$ such that $\mathcal L_{\mathbb C}\simeq L$ and such that for any $b\in B'$ the fibre $\mathcal X_b$ of $\mathcal X=\mathcal L\cap \mathcal Y$ is smooth.  Up to passing to an \'etale cover of $B'$,  one can assume that the inclusion $(\mathbb Z/\ell)^3\subset G_{\mathbb C}$ from proposition \ref{cohBGA} extends an inclusion $i:\mathcal A=(\mathbb Z/{\ell})^3_{B'}\hookrightarrow \mathcal G_{B'}$ (cf. \cite{SGA3} XI.5.8). 

Let $b\in B'$ and let $k=\kappa(b)$. As the schemes $\mathcal X$, $\mathcal Y$ and $\mathcal U/\mathcal A$ are smooth over $B'$, we have the following commutative diagram, where the vertical maps are induced by the specialisation maps: 

{\footnotesize
 $$\xymatrix{
H^{4}_{\acute{e}t}(X,\bZ_{\ell}(2))\ar[d]& H^{4}_{\acute{e}t}(Y,\bZ_{\ell}(2))\ar[l]\ar[d]\ar[r] & H^{4}_{\acute{e}t}(\mathcal U_{\mathbb C}/(\mathbb Z/\ell)^3,\bZ/{\ell})\ar[d]  &   H^{4}_{\acute{e}t}(B(\mathbb Z/\ell)^3,\bZ/{\ell})\ar@{=}[d]\ar_{\simeq}[l]  &\\
H^{4}_{\acute{e}t}(\mathcal X_{\bar k},\bZ_{\ell}(2))& H^{4}_{\acute{e}t}(\mathcal Y_{\bar k},\bZ_{\ell}(2))\ar[l]\ar[r] & H^{4}_{\acute{e}t}(\mathcal U_{\bar k}/(\mathbb Z/\ell)^3,\bZ/{\ell})&  H^{4}_{\acute{e}t}(B(\mathbb Z/\ell)^3,\bZ/{\ell})\ar_{\simeq}[l] &
}
$$
}

The left vertical map is an isomorphism since $\mathcal X$ is proper. Hence we get a class $y_{4,k}\in H^{4}_{\acute{e}t}(\mathcal X_{\bar k},\bZ_{\ell}(2)),$ corresponding to $x_4(G)\in H^{4}_{\acute{e}t}(X,\bZ_{\ell}(2))$. The map $H^{4}_{\acute{e}t}(Y,\bZ_{\ell}(2))\to H^{4}_{\acute{e}t}(X,\bZ_{\ell}(2))$ is an isomorphism by step $2$, so that $y_{4,k}$ comes from an element $x_{4,k}\in H^{4}_{\acute{e}t}(\mathcal Y_{\bar k},\bZ_{\ell}(2)).$ Let $z_{4,k}\in H^{4}_{\acute{e}t}(B(\mathbb Z/\ell)^3,\bZ/{\ell})$ be the image of $x_{4,k}$.  From the diagram and proposition \ref{cohBGA} we deduce that   $Q_1(z_{4,k})=Q_1Q_0(x_1x_2x_3)\neq 0$, hence $Q_1(\bar y_{4,k})$ is non zero as well. From the construction, the class $y_{4,k}$ generates a subgroup of $ H^{4}_{\acute{e}t}(\mathcal X_{\bar k},\bZ_{\ell}(2)),$  which is a direct factor isomorphic to $\mathbb Z_{\ell}$, and is Galois-invariant. Letting $X_k=\mathcal X_{k}$ this finishes the proof of the proposition.

\qed

\rem{ We can also adapt the arguments of \cite[Th\'eor\`eme 2.1]{CTSz10} to  produce projective examples with higher torsion non-algebraic classes, while in {\it loc.cit.} one constructs   $\ell$-torsion classes.
Let $G(n)$ be the finite group $G(\bF_{\ell^{n}})$, so that we have
$$\varprojlim H^*_{\acute{e}t}(BG(n),\bZ_{\ell})=
H^*_{\acute{e}t}(BG_{\bar k},\bZ_{\ell}).$$
Then,
following the construction in {\it loc.cit.} one gets
\begin{quote}{\it
 For any $n>0$, there exits a positive integer $i_n$ and
 a Godeaux-Serre variety  $X_{n, \bar k}$  for the finite
group $G(n)$ such that
\begin{itemize}
\item[(1)]\quad  $x\in H_{\acute{e}t}^4(X_{n,\bar k};\bZ_{\ell}(2))$ generates
$\bZ/{\ell^{n'}}$ for some $n'\ge n$;
\item [(2)]\quad $x$ is not in the image of the cycle class map (\ref{iTate}).\\
\end{itemize}}
\end{quote}


\begin{thebibliography}{Ra-Wi-Ya}

  \bibitem{AH62} M. F. Atiyah, F. Hirzebruch, {\it Analytic cycles on complex manifolds}, Topology {\bf 1} (1962), 25 -- 45.

\bibitem{CTSa07} J.-L. Colliot-Th\'el\`ene et J.-J. Sansuc, {\it The rationality problem for fields of invariants under linear algebraic groups (with special regards to the Brauer group)}, Algebraic groups and homogeneous spaces, 113--186, Tata Inst. Fund. Res. Stud. Math., {\bf 19}, Tata Inst. Fund. Res., Mumbai, 2007.


  \bibitem{CTSz10} J.-L. Colliot-Th\'el\`ene et T. Szamuely, {\it Autour de la conjecture de Tate \`a coefficients $\Z_{\ell}$ sur les corps finis}, The Geometry of Algebraic Cycles (ed.  Akhtar, Brosnan, Joshua), AMS/Clay Institute Proceedings (2010), 83--98.

    \bibitem{Go-Ma} M. Goresky and R. MacPherson, {\it
Stratified Morse theory},
Ergebnisse der Mathematik und ihrer Grenzgebiete (3) [Results in Mathematics and Related Areas (3)], {\bf 14}. Springer-Verlag, Berlin, 1988.

\bibitem{Gr} R. L. Griess, {\it Elementary abelian $p$-subgroups of algebraic groups},
Geom. Dedicata {\bf 39} (1991), no. 3, 253--305. 

\bibitem{Ek} T. Ekedahl, {\it Approximating classifying spaces by smooth projective varieties},  	arXiv:0905.1538.

\bibitem{KN} B. Kahn et T.-K.-Ngan Nguyen, {\it Modules de cycles et classes non ramifi\'ees sur un espace classifiant},  arXiv:1211.0304.



\bibitem{Ka-Ya} M. Kameko and N. Yagita, {\it Chern subrings}, Proc. Amer. Math. Soc. {\bf 138} (2010), no.~1, 367--373.

\bibitem{Ka-No} R. Kane and D. Nothbohn, {\it Elementary abelian $p$-subgroups of Lie groups},
Publ. Res. Inst. Math. Sci. {\bf 27} (1991), no. 5, 801--811. 


\bibitem{Ka-Te-Ya} M. Kameko, M. Tezuka and N. Yagita, {\it Coniveau spectral sequences of classifying spaces for exceptional and Spin groups}, Math. Proc. Cambridge Phil. Soc.
\textbf{98} (2012), 251--278.



\bibitem{Ko90} J. Koll\'ar, {\it In
Trento examples},  in
{\it Classification of irregular
varieties}, edited by E. Ballico, F. Catanese, C. Ciliberto, Lecture Notes in
Math. {\bf 1515}, Springer (1990).


\bibitem{Mi07} J. S. Milne, {\it The Tate conjecture over finite fields (AIM talk)}, 2007.

\bibitem{Mi-To} M. Mimura and H. Toda, {\it Topology of Lie groups}, I and II, Translations of Math. Monographs, Amer. Math. Soc, {\bf 91} (1991).

\bibitem{Mo-Vo}
F.Morel and V.Voevodsky, \emph{ $\mathbb A^1$-homotopy theory of schemes}, Publ.Math. IHES,
\textbf{90} (1999), 45--143.

\bibitem{Mu65} D. Mumford,  {\it Geometric invariant theory},  Ergebnisse der Mathematik und ihrer Grenzgebiete, Neue Folge, Band {\bf 34} Springer-Verlag, Berlin-New York 1965.

\bibitem{Ri12} J. Riou, {\it Op\'erations de Steenrod motiviques}, preprint, 2012.

\bibitem{Sch98} Ch. Schoen, {\it An integral analog of the Tate conjecture for one-dimensional cycles on varieties over finite fields}, Math. Ann. {\bf 311} (1998), no. 3, 493-500.

\bibitem{Sec} C.S. Seshadri, {\it Geometric reductivity over arbitrary base}, Adv. Math.,
{\bf 26} (1977)  225--274.

\bibitem{Ta65} J. Tate, {\it Algebraic cycles and poles of zeta functions}, Arithmetical algebraic geometry (Proc. Conf. Purdue Univ. 1963), 93 -- 110, Harper and Row, New York (1965).


\bibitem{To97} B. Totaro, {\it Torsion algebraic cycles and complex cobordism}, J. Amer. Math. Soc. {\bf 10} (1997), no. 2, 467--493.


\bibitem{To99} B. Totaro, {\it The Chow ring of a classifying space}, in  "Algebraic K-theory", ed. W. Raskind and C. Weibel, Proceedings of Symposia in Pure Mathematics,  \textbf{67}, American Mathematical Society (1999), 249--281.


\bibitem{Vo2}
V.Voevodsky, {\it Reduced power operations in motivic cohomology}, Publ. Math. IHES
\textbf{98} (2003),1--57.


\bibitem[SGA3]{SGA3} M. Demazure et A. Grothendieck, \emph{Sch\'emas en groupes},  S\'eminaire de G\'eom\'etrie Alg\'ebrique du Bois Marie SGA 3,  Lecture Notes in Math. \textbf{151, 152, 153}, Springer, Berlin-Heildelberg-New York, 1977, r\'e\'edition Tomes I, III, Publications de la SMF, Documents math\'ematiques {\bf 7, 8} (2011).




\end{thebibliography}
\end{document}